\title{Quantum Plane}
\newtheorem{theorem}{Theorem}
\newtheorem{proposition}[theorem]{Proposition}
\newtheorem{lemma}[theorem]{Lemma}
\newtheorem{example}[theorem]{Example}
\newtheorem{remark}[theorem]{Remark}
\DeclareMathOperator{\Der}{Der}
\DeclareMathOperator{\Derint}{Derint}
\DeclareMathOperator{\Aut}{Aut}
\DeclareMathOperator{\centro}{Z}
\DeclareMathOperator{\ad}{ad}
\DeclareMathOperator{\ord}{o}
\title{\textbf{\LARGE On Isotropy Groups of Quantum Plane}}
\author{ \small
ADRIANO DE SANTANA, RENE BALTAZAR, ROBSON VINCIGUERRA, WILIAN DE ARAUJO}
\date{\vspace{-5ex}}
\begin{document}
\Large{
\maketitle

\begin{abstract}
This paper investigates the isotropy groups of derivations on the Quantum Plane $\Bbbk_q[x, y]$, defined by the relation \(yx = qxy\), where \(q \in \Bbbk^*\), with $q^2\neq 1$. The main goal is to determine the automorphisms of the Quantum Plane that commutes with a fixed derivation \(\delta\). We describe conditions under which the isotropy group \(\text{Aut}_\delta(A)\) is trivial, finite, or infinite, depending on the structure of \(\delta\) and whether \(q\) is a root of unity: additionally, we present the structure of the group in the finite case. A key tool is the analysis of polynomial equations of the form \(\mu_1^a \mu_2^b = 1\), arising from monomials in the inner part of \(\delta\). We also make explicit which finite subgroups of $\Aut(\Bbbk_q[x, y])$ are isotropy groups of some derivation: either $q$ root of unity or not. Techniques from algebraic geometry, such as intersection multiplicity, are also employed in the classification of the finite case.

\end{abstract}

\section{Introduction}

Let $\delta$ be a derivation of a $\Bbbk$-algebra $R$ and $\Aut(R)$ the group of $\Bbbk$-automorphisms of $R$ with $\Bbbk$ an infinite field. Note that $Aut(R)$ acts by conjugation over the module of derivations of $R$, $\Der(R)$: given $\delta \in \Der(R)$ and $\rho \in \Aut(R)$, then $\rho \delta \rho^{-1} \in \Der(R)$. The \textit{isotropy} subgroup, with respect to this group action, is defined
as
$$\Aut_\delta(R):=\{ \rho \in \Aut(R) \ \vert \ \rho \delta = \delta\rho \}. $$

The behavior of an isotropy subgroup is far from being completely understood. In recent years, there has been a notable increase in research focused on the isotropy group. This growing interest is reflected in the contributions of researchers such as: I. Pan, L. Mendes, D. Levcovitz, L. Bertoncello, R. Baltazar, D. Yan, Y. Huang, M. Veloso, N. Dasgupta, A. Lahiri, A. Rittatore, S. Kour, and H. Rewri (see \cite{Kour, BaltPan, Rittatore, Dasgupta, BaltaVeloso, HuYan, Balta, LevBert, PanMendes}). On this topic in the case of a quantized Weyl algebra, see \cite{SBVA}.

The Quantum Plane, $\Bbbk_q[x,y]$, is defined as the quotient of the free associative $\Bbbk$-algebra $ \Bbbk\langle x, y \rangle $, generated by two variables $ x $ and $ y $ , by the two-sided ideal generated by the relation $yx - qxy$, where $ q \in \Bbbk^*$. When $ q = 1 $, the relation becomes commutative, recovering the classical polynomial ring $\Bbbk[x, y]$.

It is quite common for different sets of generators and relations to define the same intrinsic structure, which is why algebras are typically classified up to isomorphism.
The Quantum Plane arises from an interesting classical result: an Ore extension over a polynomial algebra $\Bbbk[x]$ is either a Quantum Plane, a Quantum Weyl algebra, or an infinite-dimensional associative algebra $A_h$ generated by elements $x, y$, that satisfy $yx-xy = h$, where $h \in \Bbbk[x]$ (for more details, see Lemma 1.2. \cite{Samuel} and \cite{SBVA}). For example, it is already well known that considering quantization of the Jordan Plane we obtain an algebra isomorphic to the Quantum Plane, see \cite[Example 3.1]{Samuel24}.

Relying on the characterization of derivations and automorphisms in the Quantum Plane provided by Alev and Chamarie \cite[Theorem 1.2 and Proposition 1.4.4]{Alev}, this work investigates, throughout sections 3, 4 and 5, the following fundamental questions:

\begin{itemize}
    \item  Fixed $\delta \in \Der (\Bbbk_q[x,y])$, is it possible to make explicit $\Aut_\delta(\Bbbk_q[x,y]$)?
    \item Does the class of isotropy groups of $\Bbbk_q[x,y]$ includes, up to isomorphism, all finite groups of $\Aut(\Bbbk_q[x,y])$?
    \item Are there necessary and sufficient conditions for the isotropy of a fixed derivation to be trivial?
\end{itemize}

In the following section, this analysis is complemented by a geometric approach, where we applied intersection multiplicity to the curves defined by $ x^a y^b = 1 $ and $ x^c y^d = 1$, confirming previous results and offering a geometric interpretation of the number of affine intersection distinct points, $|ad - bc|$.

%Finally, the last section proposes future research directions motivated by the structure of the automorphism group of $\Bbbk_{-1}[x,y]$ as described in \cite[Proposition 1.4.4]{Alev}.

\section{Generalities}

Let \( \Bbbk \) be a field and let \( R \) be a \( \Bbbk \)-algebra of dimension two generated by the elements \( x \) and \( y \). Let \( \delta: R \to R \) be a derivation on \( R \) defined by the polynomials
\[
\delta(x) = \sum_{i,j} a_{ij} x^i y^j
\quad
\mbox{and}
\quad
\delta(y) = \sum_{i,j} b_{ij} x^i y^j.
\]

Let \( \rho \in \mathrm{Aut}(R) \) be an automorphism defined by \( R \)-linearity from the identities
\[
\rho(x) = \mu_1 x, \quad \rho(y) = \mu_2 y,
\]
where \( \mu_1, \mu_2 \in \Bbbk^* \).

Assume now that \( \rho \in \mathrm{Aut}_\delta(R) \); then, $\rho \delta - \delta \rho = 0$. Applying this commutativity condition to the generator \( x \), we obtain:
\begin{equation}
\sum_{i,j} a_{ij} \mu_1 \left( \mu_1^{i-1} \mu_2^j - 1 \right) x^i y^j = 0. \label{eq:com x}
\end{equation}
Similarly, applying it to the generator \( y \), we obtain:
\begin{equation}
\sum_{i,j} b_{ij} \mu_2 \left( \mu_1^i \mu_2^{j-1} - 1 \right) x^i y^j = 0. \label{eq:com y}
\end{equation}

By Equation \eqref{eq:com x}, we must have that \( \mu_1^{i-1} \mu_2^j = 1 \) for every pair \( (i,j) \) such that \( a_{ij} \neq 0 \). Similarly, by Equation~\eqref{eq:com y}, we must have \( \mu_1^i \mu_2^{j-1} = 1 \) for every pair \( (i,j) \) such that \( b_{ij} \neq 0 \). Both of these conditions reduce to requiring that \( \mu_1 \) and \( \mu_2 \) satisfy equations of the form
\begin{equation}
    \mu_1^m \mu_2^n - 1 = 0, \label{eq:MuMu}
\end{equation}
for suitable integers \( m \geq -1 \) and \( n \geq -1 \) depending on the derivation \( \delta \).

\begin{remark}\label{remark0} We have the following known results:

If $R=\Bbbk_q[x,y]$ is the Quantum Plane, with \( q \neq \pm 1 \), then any automorphism \( \rho \in \mathrm{Aut}(R) \) is such that \( \rho(x) = \mu_1 x \) and \( \rho(y) = \mu_2 y \), with \( \mu_1, \mu_2 \in \Bbbk^* \) \cite[Proposition 1.4.4]{Alev}. Thus, \( \Aut(A) \cong \Bbbk^* \oplus \Bbbk^*\).

If $R$ is the quantized Weyl algebra, according to~\cite[Theorem B]{vivas}, a map \( \rho: R \to R \) given by \( \rho(x) = \mu_1 x \) and \( \rho(y) = \mu_2 y \) defines an automorphism if and only if \( \mu_1 = \mu_2^{-1} \).  Thus, \( \Aut(R) \cong \Bbbk^* \).

In the case $R=\Bbbk_{-1}[x,y]$, an automorphism \( \rho \) is given by \( \rho(x) = \mu_1 \sigma(x) \) and \( \rho(y) = \mu_2 \sigma(y) \), where \( \sigma \) is a permutation of the pair \( (x,y) \). Thus, \( \Aut(R) \cong \mathbb{Z}_2 \ltimes (\Bbbk^* \oplus \Bbbk^*) \) (\cite[Proposition 1.4.4]{Alev}).

\end{remark}

% ISSO VAI PARA OBSERVAÇÃO 0... In the case \( q = -1 \), the automorphism \( \rho \) is given by \( \rho(x) = \mu_1 \sigma(x) \) and \( \rho(y) = \mu_2 \sigma(y) \), where \( \sigma \) is a permutation of the pair \( (x,y) \). In this situation, we have \( \Aut(A) \cong \mathbb{Z}_2 \ltimes (\Bbbk^* \oplus \Bbbk^*) \).

In what follows, we obtain a description of the isotropy groups when $R=\Bbbk_q[x,y]$ is the Quantum Plane, with \( q \neq \pm 1 \).

\section{Equations for the Quantum Plane with $q \neq \pm 1$}

Let \( A = \Bbbk_q[x,y] \) be the Quantum Plane, with \( q \neq \pm 1 \). It is known that 
\begin{equation}
    \label{eq:estrutura_der_a}
    \Der(A) = \Derint(A) \oplus \centro(A)D_x \oplus \centro(A)D_y
\end{equation}
where \( \Derint(A) \) is the group of inner derivations of \( A \), \( \centro(A) \) is the center of the algebra, \( D_x = x\partial_x \) and \( D_y = y\partial_y \) (see \cite[Theorem 1.2]{Alev}).

Let \( \rho \in \Aut(A) \), \( \rho(x) = \mu_1 x \) and \( \rho(y) = \mu_2 y \) (see Remark \ref{remark0}). Clearly, $\rho \in \Aut_{D_x}(A) \cap \Aut_{D_y}(A)$. Thus, if \( \delta = \ad_w + aD_x + bD_y \), then \( \rho\delta = \delta\rho \) if and only if \( \rho\ad_w = \ad_w\rho \); that is, \( \rho \in \Aut_\delta(A) \) if and only if \( \rho \) commutes with the inner part of \( \delta \).

%Let \( A = \Bbbk_q[x, y] \), with \( yx = qxy \), be the Quantum Plane with \( q \neq -1 \), and let \( \delta \in \Der(A) \). Then, Equation \eqref{eq:estrutura_der_a} tells us that there exist \( w \in \Bbbk_q[x, y] \) and \( a, b \in \centro(A) \) such that
%\[
%\delta = \ad_w + aD_x + bD_y.
%\]
%Since, for any \( \rho \in \Aut(A) \), we have \( \rho D_x = D_x \rho \) and \( \rho D_y = D_y \rho \), it follows that \( \rho \in \Aut_\delta(A) \) if and only if \( \rho \ad_w = \ad_w \rho \).

If \( w = \sum c_{ij}x^i y^j \), then \( \rho \ad_w = \ad_w \rho \) if and only if
\begin{equation}
    \label{eq:mu_da_interna_x}
    \begin{array}{rcl}
        0 &=& (\rho \ad_w - \ad_w \rho)(x) \\
          &=& \sum c_{ij} \mu_1 (q^j - 1)(\mu_1^i \mu_2^j - 1)x^{i+1} y^j
    \end{array}
\end{equation}
and
\begin{equation}
    \label{eq:mu_da_interna_y}
    \begin{array}{rcl}
        0 &=& (\rho \ad_w - \ad_w \rho)(y) \\
          &=& \sum c_{ij} \mu_2 (1 - q^i)(\mu_1^i \mu_2^j - 1)x^i y^{j+1}.
    \end{array}
\end{equation}

We observe from Equations \eqref{eq:mu_da_interna_x} and \eqref{eq:mu_da_interna_y} that \( \rho \in \Aut_\delta(A) \) if and only if, for all \( c_{ij} \neq 0 \) such that \( q^j - 1 \neq 0 \) or \( 1 - q^i \neq 0 \), the constants \( \mu_1, \mu_2 \in \Bbbk^* \) satisfy \( \mu_1^i \mu_2^j - 1 = 0 \). In particular, if \( q \) is not a root of unity, then
$$
\Aut_\delta(A) = \{(\mu_1, \mu_2) \in \Bbbk^* \oplus \Bbbk^* : (\forall c_{ij} \neq 0)(\mu_1^i \mu_2^j - 1 = 0)\}.
$$

\textbf{Examples:} Suppose that \( q \) is not a root of unity and let \( \delta = \ad_w + aD_x + bD_y \). Then we have:
\begin{itemize}
    \item if \( w = x \), then \( \Aut_\delta(A) \cong \Bbbk^* \) (The same holds if \( w = y \));
    \item if \( w = x^m \), then \( \Aut_\delta(A) \subseteq \mathbb{Z}_m \oplus \Bbbk^* \) (closed immersion), with equality holding when \( \Bbbk \) is algebraically closed;
    \item if \( w = x^m + y^n \), then \( \Aut_\delta(A) \subseteq \mathbb{Z}_m \oplus \mathbb{Z}_n \) (closed immersion), with equality holding when \( \Bbbk \) is algebraically closed;
    \item if \( w = x^m y^n \) consists of a unique monomial, then \( \Aut_\delta(A) \) is infinite;
    \item if \( w \) is constant, then \( \Aut_\delta(A) = \Aut(A) \).
\end{itemize}

\begin{remark}
  Note that in the above examples we considered $\Bbbk$ to be an algebraically closed field to represent the isomorphic structure of the isotropy group. Even though the field is not algebraically closed, we can solve the Equations \eqref{eq:MuMu} in the algebraic closure and take the solutions that belong to the original field. The result will be a subgroup of what we would obtain in the algebraic closure. In what follows, in some circumstances, we ignore the case that the field is not algebraically closed in order to fully present the structure of the isotropy group.
\end{remark}

\begin{remark}\label{remarkcharac}
   If $q\neq \pm 1$ then $\Aut(\Bbbk_q[x,y]) \cong \Bbbk^* \times \Bbbk^* $ (a split torus). Hence, $\Aut_\delta(A)$ being a closed subgroup, it is the intersection of the kernels of some characters (see \cite[Chapter III]{Borel}). Thus, equations \eqref{eq:mu_da_interna_x} and \eqref{eq:mu_da_interna_y} give which characters one should take in order to obtain $\Aut_\delta(A)$. 
\end{remark}
\section{Conditions for Finiteness of
 $\Aut_\delta(A)$}

As observed previously, the elements of \( \Aut_\delta(A) \) can be seen as the set of points in the affine plane over \( \Bbbk \) that are intersections of polynomial equations of the form \( \mu_1^i \mu_2^j - 1 = 0 \), where these equations are determined by the internal part of the derivation \( \delta \) and by conditions on \( q \). Let \( \mu_1^{m_i} \mu_2^{n_i} - 1 = 0 \) with \( i = 1, 2, \dots, r \) be the set of equations defining \( \Aut_\delta(A) \). From classic algebraic geometry, when $\Bbbk$ is algebraically closed, we know that \( \Aut_\delta(A) \) is an infinite set if and only if all these polynomial equations have a common non-constant factor. However, as we will see in Proposition \ref{Prop2}, if these equations share a common factor, then they can actually be factored over the base field. In this case, we find that \( \Aut_\delta(A) \) is infinite if and only if $\Bbbk$ is infinite and all these polynomial equations admit a non-constant common factor.

\begin{lemma}
    \label{lem:mus}
    For each \( i = 1, \dots, r \), let \( m_i, n_i \in \mathbb{Z} \) such that, for all\linebreak \( i, j \in \{1, \dots, r\} \), $m_i n_j - m_j n_i = 0$. Then, if \( m = \gcd(m_1, \dots, m_r) \) and \( n = \gcd(n_1, \dots, n_r) \), we obtain $m_i/m = n_i/n$. 
\end{lemma}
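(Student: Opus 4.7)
My strategy is geometric. The hypothesis $m_i n_j - m_j n_i = 0$ for all $i,j$ is exactly the vanishing of every $2\times 2$ determinant formed from pairs of vectors $(m_i,n_i),(m_j,n_j)\in\mathbb{Z}^2$. Picking any index $i_0$ with $(m_{i_0},n_{i_0})\neq 0$, every other vector is $\mathbb{Q}$-proportional to it, so all $r$ vectors lie on a common rational line through the origin in $\mathbb{Q}^2$. Once the primitive integer direction of that line is extracted, the conclusion about $\gcd$'s reduces to mechanical bookkeeping.

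First I would dispose of the degenerate cases: if every $m_i=0$ then $m=0$ and the ratios $m_i/m$ are undefined, and symmetrically for $n$. These cases fall outside the scope of the intended application (where the $\gcd$'s sit in denominators), so I assume $m,n\neq 0$. In the application all exponents $m_i,n_i$ are nonnegative, as they arise from monomial bidegrees inside the inner part of the derivation; this pins the common line to the first quadrant. Accordingly I choose the primitive integer generator $(\alpha,\beta)$ with $\alpha,\beta\geq 0$ and $\gcd(\alpha,\beta)=1$. Collinearity then produces nonnegative integers $k_i$ with
$$(m_i,n_i)=k_i\,(\alpha,\beta).$$

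The remainder is a short computation. Using the routine identity $\gcd(k_1\alpha,\dots,k_r\alpha)=\alpha\,\gcd(k_1,\dots,k_r)$ (valid because $\alpha>0$) and writing $K=\gcd(k_1,\dots,k_r)$, I obtain $m=\alpha K$ and $n=\beta K$, hence
$$\frac{m_i}{m}=\frac{k_i\alpha}{\alpha K}=\frac{k_i}{K}=\frac{k_i\beta}{\beta K}=\frac{n_i}{n}.$$

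The only genuine obstacle is the sign bookkeeping when choosing the primitive direction: for integers of mixed signs a $\pm$ mismatch between $\alpha/m$ and $\beta/n$ could in principle spoil the final equality, since $\gcd$'s are by convention positive while the $m_i,n_i$ need not be. The nonnegativity inherited from the application removes this difficulty entirely. As a cross-check, a parallel B\'ezout-style argument (write $m=\sum_i a_i m_i$, $n=\sum_j b_j n_j$ and multiply the relation $m_i n_j=m_j n_i$ by $b_j$, then sum over $j$) yields $m_i n=n_i\sum_j b_j m_j$, and the right-hand coefficient can be identified with $m$ precisely in the nonnegative setting; so the two approaches confront the same sign issue, which is resolved in the same way.
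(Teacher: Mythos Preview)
Your argument is correct, and it takes a genuinely different route from the paper. The paper's own proof is a single sentence --- ``The result follows using decomposition on prime factors'' --- so the intended argument is presumably to compare $p$-adic valuations: from $m_i n_j = m_j n_i$ one deduces $v_p(m_i)-v_p(m_j)=v_p(n_i)-v_p(n_j)$ for every prime $p$, hence the index minimizing $v_p(m_\bullet)$ also minimizes $v_p(n_\bullet)$, and the equality $m_i/m=n_i/n$ follows prime by prime. Your collinearity approach is a cleaner global packaging of the same content: extracting the primitive direction $(\alpha,\beta)$ and the common scalar $K=\gcd(k_1,\dots,k_r)$ does in one stroke what the valuation argument does locally at each prime. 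A further merit of your write-up is that you explicitly flag the sign issue --- the lemma as literally stated, with $m_i,n_i\in\mathbb{Z}$, already fails for the single vector $(1,-1)$, since $m=n=1$ but $m_1/m\neq n_1/n$ --- and you correctly resolve it by restricting to the nonnegative case that Proposition~\ref{Prop2} actually uses; the paper's one-line proof is silent on this point.
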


\begin{proof}
 The result follows using decomposition on prime factors. 
\end{proof}

\begin{proposition}\label{Prop2}
    For \( i = 1, 2, \dots, r \), let \( m_1, m_2, \dots, m_r \) and \( n_1, n_2, \dots, n_r \) be non-negative integers. Then the equations \( \mu_1^{m_i} \mu_2^{n_i} - 1 = 0 \) have a non-constant common factor if and only if, for every pair \( 1 \le i, j \le r \), we have \( m_i n_j - m_j n_i = 0 \).
\end{proposition}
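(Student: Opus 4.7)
The plan is to prove the two implications separately, leveraging Lemma \ref{lem:mus} for the easy direction and a torus/group-theoretic argument for the harder one.

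For the $(\Leftarrow)$ direction, suppose $m_i n_j - m_j n_i = 0$ for every pair. Let $m = \gcd(m_1,\ldots,m_r)$ and $n = \gcd(n_1,\ldots,n_r)$; by Lemma \ref{lem:mus} there is a non-negative integer $k_i$ with $m_i = k_i m$ and $n_i = k_i n$ for every $i$. Then
\[
\mu_1^{m_i}\mu_2^{n_i} - 1 = (\mu_1^m \mu_2^n)^{k_i} - 1 = (\mu_1^m\mu_2^n - 1)\bigl(1 + \mu_1^m\mu_2^n + \cdots + (\mu_1^m\mu_2^n)^{k_i - 1}\bigr),
\]
so $\mu_1^m\mu_2^n - 1$ divides every $\mu_1^{m_i}\mu_2^{n_i} - 1$, and is itself non-constant provided $(m,n)\neq(0,0)$. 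The remaining case is trivial, since then all equations are identically zero.

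For the $(\Rightarrow)$ direction I argue by contrapositive: if $m_i n_j - m_j n_i \neq 0$ for some pair, which after relabeling I may take to be $(i,j)=(1,2)$, then it suffices to show that $f_1 := \mu_1^{m_1}\mu_2^{n_1} - 1$ and $f_2 := \mu_1^{m_2}\mu_2^{n_2} - 1$ share no non-constant factor. Since the $\gcd$ of two polynomials is unchanged by passing to the algebraic closure, I work in the UFD $\bar\Bbbk[\mu_1,\mu_2]$. Any common zero of $f_1, f_2$ must lie in the torus $T=(\bar\Bbbk^*)^2$, for otherwise one of the equations would read $-1=0$. There, the common zero locus is exactly the kernel of the group homomorphism $\phi : T \to T$, $\phi(\mu_1,\mu_2) = (\mu_1^{m_1}\mu_2^{n_1},\, \mu_1^{m_2}\mu_2^{n_2})$. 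By Smith normal form applied to the matrix $\begin{pmatrix} m_1 & n_1 \\ m_2 & n_2 \end{pmatrix}$, whose determinant $m_1n_2 - m_2n_1$ is nonzero, this kernel has order $|m_1n_2 - m_2n_1|$ and is in particular finite.

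The main obstacle I anticipate is the transition from the finiteness of the common zero set to the algebraic conclusion $\gcd(f_1, f_2) = 1$. This gap is closed by a standard UFD argument in $\bar\Bbbk[\mu_1,\mu_2]$: any non-constant common factor $g$ would produce a one-dimensional subvariety $V(g) \subseteq V(f_1)\cap V(f_2)$ (by Krull's principal ideal theorem, since $\dim \bar\Bbbk[\mu_1,\mu_2]/(g) = 1$), directly contradicting the finiteness established above.
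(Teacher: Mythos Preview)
Your argument is correct, with one small caveat: the exact order $|m_1 n_2 - m_2 n_1|$ for $\ker\phi$ can drop when the characteristic of $\Bbbk$ divides the determinant, but finiteness---which is all you use---survives since the Smith divisors $d_1,d_2$ are nonzero. The $(\Leftarrow)$ direction matches the paper's proof essentially verbatim.

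For $(\Rightarrow)$, however, you take a genuinely different route. The paper embeds the two polynomials into $\overline{\Bbbk(\mu_2)}[\mu_1]$, lists the roots $\mu_1 = \xi_{m_i}^t\,\mu_2^{-n_i/m_i}$ of each, and argues that equality of any two such roots forces $m_i n_j = m_j n_i$. This is elementary but tacitly assumes $m_i,m_j>0$ and that the putative common factor is not a polynomial in $\mu_2$ alone (which would become a unit in $\Bbbk(\mu_2)$); both gaps are easy to fill but are not addressed. Your torus-endomorphism argument, by contrast, handles all exponent configurations uniformly, aligns naturally with the character-kernel description of $\Aut_\delta(A)$ in Remark~\ref{remarkcharac}, and the cardinality bound you extract already anticipates the count $|G|=|ad-bc|$ of Theorem~\ref{teo:estrutura_grupo_isotropia}. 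The trade-off is that you invoke heavier tools (base change of the $\gcd$, Smith normal form, Krull's principal ideal theorem) where the paper gets by with explicit root extraction.
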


\begin{proof}
    First, suppose that there exists a pair \( 1 \le i, j \le r \) such that \( m_i n_j - m_j n_i \neq 0 \). We prove that the equations
\[
\mu_1^{m_i} \mu_2^{n_i} - 1 = 0 
\quad \text{and} \quad 
\mu_1^{m_j} \mu_2^{n_j} - 1 = 0
\]
do not have a non-constant common factor in $\Bbbk[\mu_1,\mu_2]$. 
Indeed, suppose by contradiction that they do, and consider them in 
$\overline{\Bbbk[\mu_2]}[\mu_1]$, where $\overline{\Bbbk[\mu_2]}$ is the 
algebraic closure of the field of fractions of $\Bbbk[\mu_2]$. 
Then we can write $\mu_1^{m_i}=\mu_2^{-n_i}$ and $\mu_1^{m_j}=\mu_2^{-n_j}$. The solutions of these equations are respectively
\[
\mu_1 = \xi_{m_i}^t \sqrt[m_i]{\mu_2^{-n_i}} 
\quad \text{with } t=1,\ldots,m_i-1,
\]
and
\[
\mu_1 = \xi_{m_j}^s \sqrt[m_j]{\mu_2^{-n_j}} 
\quad \text{with } s=1,\ldots,m_j-1.
\]

Since the equations share a non-constant component, there exists a common 
root in $\overline{\Bbbk[\mu_2]}$, that is, there exist $t$ and $s$ such that
\[
\xi_{m_i}^t \sqrt[m_i]{\mu_2^{-n_i}} 
= \xi_{m_j}^s \sqrt[m_j]{\mu_2^{-n_j}}.
\]
But this implies $m_i n_j - m_j n_i = 0$, a contradiction.

    Now, suppose that for every pair \( i, j \), \( m_i n_j - m_j n_i = 0 \), and let \( m = \gcd(m_1, \dots, m_r) \) and \( n = \gcd(n_1, \dots, n_r) \). By Lemma \ref{lem:mus}, we have that for every \( i = 1, \dots, r \), \( m_i / m = n_i / n \). Thus, if \( b_i = m_i / m = n_i / n \), it follows that
    \[
    \mu_1^{m_i} \mu_2^{n_i} - 1 = (\mu_1^m \mu_2^n - 1) \sum_{l=0}^{b_i} \mu_1^{m_i - lm} \mu_2^{n_i - ln},
    \]
    which means that the equations \( \mu_1^{m_i} \mu_2^{n_i} - 1 = 0 \) has \( \mu_1^m \mu_2^n - 1 = 0 \) as a non-constant common factor.
\end{proof}

\begin{proposition}
    Lets $\delta = a d_w + a D_x + b D_y$ be a derivation on $A$, with $w = \sum c_{ij}x^i y^j$. If $q$ is not a root of unity, then $\Aut_\delta(A)$ is finite if and only if there exist $c_{ij} \neq 0$ and $c_{rs} \neq 0$ such that $is - rj \neq 0$. If $q$ is a root of unity of order $n \neq 2$, then $\Aut_\delta(A)$ is finite if and only if there exist $c_{ij} \neq 0$ and $c_{rs} \neq 0$ such that $is - rj \neq 0$ and $(i,j), (r,s) \notin n\mathbb{Z} \times n\mathbb{Z}$.
 
\end{proposition}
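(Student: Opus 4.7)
The plan is to reduce the statement to Proposition~\ref{Prop2} by first identifying, in each of the two cases for $q$, which pairs $(i,j)$ with $c_{ij}\neq 0$ actually contribute a defining equation $\mu_1^i\mu_2^j-1=0$ to $\Aut_\delta(A)$. By inspection of Equations~\eqref{eq:mu_da_interna_x} and~\eqref{eq:mu_da_interna_y}, such a pair imposes this relation precisely when $q^i-1\neq 0$ or $q^j-1\neq 0$. If $q$ is not a root of unity, the condition reduces to $(i,j)\neq(0,0)$; if $q$ has order $n\ge 3$, it reduces to $(i,j)\notin n\mathbb{Z}\times n\mathbb{Z}$. I will call the resulting pairs \emph{relevant}; note that in both situations $(0,0)$ is never relevant.

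Next, I would apply Proposition~\ref{Prop2} to the family $\{\mu_1^i\mu_2^j-1=0\}$ indexed by the relevant pairs. Suppose every two relevant indices $(i,j),(r,s)$ satisfy $is-rj=0$. Then Proposition~\ref{Prop2} produces a non-constant common factor $\mu_1^m\mu_2^n-1$, with $(m,n)\neq(0,0)$ because the relevant set contains no $(0,0)$. Over the infinite field $\Bbbk$, this single equation has infinitely many zeros in $\Bbbk^*\oplus\Bbbk^*$, so $\Aut_\delta(A)$ is infinite. Conversely, if some relevant pair $(i,j),(r,s)$ satisfies $is-rj\neq 0$, Proposition~\ref{Prop2} implies that $\mu_1^i\mu_2^j-1$ and $\mu_1^r\mu_2^s-1$ share no non-constant factor in $\Bbbk[\mu_1,\mu_2]$; consequently their common zero locus is finite even over $\overline{\Bbbk}$, so $\Aut_\delta(A)$ is finite a fortiori. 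Specializing the description of the relevant set to each case of $q$ yields the two equivalences in the statement.

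The main obstacle I anticipate is the bookkeeping at the filtering step: one must check carefully that $(0,0)$ is always excluded from the relevant set, so the gcd pair $(m,n)$ furnished by Lemma~\ref{lem:mus} and Proposition~\ref{Prop2} does not vanish and the common factor is truly non-constant. The remaining ingredient is the routine observation that a single equation $\mu_1^m\mu_2^n=1$ with $(m,n)\neq(0,0)$ has infinitely many solutions in $\Bbbk^*\oplus\Bbbk^*$ whenever $\Bbbk$ is infinite, for instance via the parametrization $t\mapsto(t^{-n},t^m)$ when $mn\neq 0$, or directly by freeing the other variable when exactly one of $m,n$ vanishes.
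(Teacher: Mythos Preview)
Your proposal is correct and follows essentially the same route as the paper: the paper's own proof is the single line ``It follows from Proposition~\ref{Prop2} and equations~\eqref{eq:com x} and~\eqref{eq:com y},'' and what you have done is unpack this by making explicit the filtering step (which monomials of $w$ actually impose a constraint, depending on whether $q$ is a root of unity) and the finiteness/infiniteness dichotomy already recorded in the paragraph preceding Lemma~\ref{lem:mus}. Your attention to the edge case $(i,j)=(0,0)$ and your explicit parametrization showing infiniteness of a single equation $\mu_1^m\mu_2^n=1$ are details the paper leaves implicit, but the underlying argument is the same.
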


\begin{proof}
    It follows from Proposition \ref{Prop2} and equations \ref{eq:com x} and \ref{eq:com y}.
\end{proof}

Let $G$ be a finite subgroup of $\Bbbk^* \oplus \Bbbk^*$. Then, by the structure theorem for finite abelian groups,
$$
G \cong \mathbb{Z}_{n_1} \oplus \mathbb{Z}_{n_2} \oplus \ldots \oplus \mathbb{Z}_{n_r},
$$
where $n_{i+1} \mid n_i$. If $n_3 = n \neq 1$, then there exists a subgroup $H$ of $G$ such that $H \cong \mathbb{Z}_n \oplus \mathbb{Z}_n \oplus \mathbb{Z}_n$. Note that the equation $x^n - 1 = 0$ has $n^3$ solutions in $H$, but at most $n^2$ solutions in $\Bbbk^* \oplus \Bbbk^*$. Therefore, $G$ is not a subgroup of $\Bbbk^* \oplus \Bbbk^*$. Thus, we conclude that $G \cong \mathbb{Z}_{n_1} \oplus \mathbb{Z}_{n_2}$, with $n_2 \mid n_1$.

Note also that if $\Bbbk$ is algebraically closed, then for any $n_1, n_2 \in \mathbb{Z}$, there exists a subgroup $G < \Bbbk^* \oplus \Bbbk^*$ such that $G \cong \mathbb{Z}_{n_1} \oplus \mathbb{Z}_{n_2}$: it suffices to take $G = \langle \xi_{n_1}, \xi_{n_2} \rangle$, where $\xi_{n_i}$ is a primitive $n_i$-th root of unity.

We denote the order of an element $a$ in a group by $\ord(a)$.

\begin{theorem}\label{classisotropy} 
Let \( q \) be not a root of unity. Then the class of isotropy groups of \( A \) includes, up to isomorphism, all finite subgroups of \(\Aut(A)\).
\end{theorem}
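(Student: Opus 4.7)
The strategy is constructive: for each isomorphism class of finite subgroup of $\Aut(A)$, I will exhibit one explicit derivation whose isotropy realizes it. The argument is essentially a direct application of the third bullet in the list of examples at the end of Section~3.

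First, recall from the discussion immediately preceding the statement that any finite subgroup $G$ of $\Aut(A) \cong \Bbbk^* \oplus \Bbbk^*$ is isomorphic to $\mathbb{Z}_{n_1} \oplus \mathbb{Z}_{n_2}$ for some positive integers with $n_2 \mid n_1$. Moreover, the very existence of such a $G$ inside $\Bbbk^* \oplus \Bbbk^*$ forces $\Bbbk^*$ to contain elements of orders $n_1$ and $n_2$, hence all $n_1$-th and $n_2$-th roots of unity.

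Given such $n_1, n_2$, the plan is to set $w := x^{n_1} + y^{n_2}$ and take $\delta := \ad_w$. Since $q$ is not a root of unity, for every $k \geq 1$ one has $q^k - 1 \neq 0$, so the monomial $x^{n_1}$ produces via \eqref{eq:mu_da_interna_y} the single constraint $\mu_1^{n_1} = 1$, while the monomial $y^{n_2}$ produces via \eqref{eq:mu_da_interna_x} the single constraint $\mu_2^{n_2} = 1$. The remaining conditions coming from \eqref{eq:mu_da_interna_x} applied to $x^{n_1}$ and from \eqref{eq:mu_da_interna_y} applied to $y^{n_2}$ vanish automatically because their $q$-factors reduce to $q^0 - 1 = 0$. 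Hence
\[
\Aut_\delta(A) \;=\; \{(\mu_1, \mu_2) \in \Bbbk^* \oplus \Bbbk^* : \mu_1^{n_1} = 1,\; \mu_2^{n_2} = 1\},
\]
which, by the root-of-unity content of $\Bbbk$ established in the first step, is isomorphic to $\mathbb{Z}_{n_1} \oplus \mathbb{Z}_{n_2} \cong G$.

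Since the whole proof is a direct invocation of an example already verified in Section~3, no serious obstacle arises. The only delicate points to confirm are that the construction accommodates the degenerate cases $n_2 = 1$ (where $w = x^{n_1} + y$ yields $\Aut_\delta(A) \cong \mathbb{Z}_{n_1}$) and $n_1 = n_2 = 1$ (where $w = x + y$ yields the trivial isotropy), and that the two monomials of $w$ contribute no further constraint beyond the two displayed, both of which follow immediately from inspection of \eqref{eq:mu_da_interna_x} and \eqref{eq:mu_da_interna_y}.
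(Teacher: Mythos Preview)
Your proof is correct and follows the same approach as the paper: both take $w = x^{n_1} + y^{n_2}$, $\delta = \ad_w$, and read off $\Aut_\delta(A)$ from equations \eqref{eq:mu_da_interna_x}--\eqref{eq:mu_da_interna_y}. The one difference is that the paper devotes a paragraph to an explicit prime-factorization construction of a primitive $n_1$-th root of unity in $\Bbbk$, whereas you compress this to the assertion that the existence of $G \subset \Bbbk^* \oplus \Bbbk^*$ forces $\Bbbk^*$ to contain an element of order $n_1$; that step is correct but tacitly uses that finite subgroups of $\Bbbk^*$ are cyclic (so an element $(a,b)$ of order $n_1$ yields $\mathrm{lcm}(\mathrm{o}(a),\mathrm{o}(b))=n_1$ and hence a primitive $n_1$-th root in $\langle a,b\rangle \subset \Bbbk^*$), and it would be worth making that one line explicit.
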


\begin{proof}
Let \( G < \Aut(A) \) be finite. We will show that there exists \(\delta \in \Der(A)\) such that \(\Aut_\delta(A) \cong G\).

In fact, since \( G \subseteq \Aut(A) \cong \Bbbk^* \oplus \Bbbk^* \), there exist \( n_1, n_2 \in \mathbb{Z} \) such that \( n_2 \mid n_1 \) and \( G \cong \mathbb{Z}_{n_1} \oplus \mathbb{Z}_{n_2} \).
Hence, there exists \( x \in G \) such that \(\ord(x) = n_1\).
As \( G \subset \Bbbk^* \oplus \Bbbk^* \), there exist \( a,b \in \Bbbk^* \) such that \( x = (a,b) \) and \( a^{n_1} = 1 = b^{n_1} \).
Let \(\ord(a) = r\) and \(\ord(b) = s\).
Factoring into primes, we have 
\[
r = p_1^{r_1} p_2^{r_2} \ldots p_m^{r_m}, \quad \mbox{and} \quad s = p_1^{s_1} p_2^{s_2} \ldots p_m^{s_m}.
\]
Let \( d = \gcd(r,s) = p_1^{d_1} p_2^{d_2} \ldots p_m^{d_m} \), where \( d_i = \min(r_i, s_i) \). Moreover, for each \( i \), define
$$f_i = \begin{cases}
    r_i,    & \text {if } r_i=\max \{r_i,s_i\}\hphantom{\text{ and }  r_i \neq s_i}\\
    0,      & \text {otherwise,}
\end{cases}$$
\text {and }
$$g_i = \begin{cases}
    s_i,    & \text {if } s_i=\max \{r_i,s_i\} \text{ and }  r_i \neq s_i\\
    0,      & \text {otherwise.}
\end{cases}
$$
Let
\[
r' = p_1^{f_1} p_2^{f_2} \ldots p_m^{f_m} \quad \text{and} \quad s' = p_1^{g_1} p_2^{g_2} \ldots p_m^{g_m}.
\]
Notice that \(\gcd(r', s') = 1\) and \( r's' = d \cdot \frac{r}{d} \cdot \frac{s}{d} \).
Let \( a' = a^{\frac{r}{r'}} \) and \( b' = b^{\frac{s}{s'}} \), so \(\ord(a') = r'\) and \(\ord(b') = s'\).
Notice that  \(\ord(a' b') = r's'\).
Since 
$x^{r's'} = (a^{r \frac{s}{d}}, b^{s \frac{r}{d}}) = (1,1)$,
we have \( n_1 \mid r's' \).
Moreover,
$(a' b')^{n_1} = (a^{\frac{r }{r'}} b^{\frac{s }{s'}})^{n_1} = 1$,
so \( r's' \mid n_1 \).
We conclude that \( n_1 = r's' \) and hence \(\xi_{n_1} = a' b'\) is a primitive \( n_1 \)-th root of unity.

Let \( w = x^{n_1} + y^{n_2} \) and \(\delta = \ad_w\).
Since \( n_2 \mid n_1 \), there exists an integer \( p \) such that \( n_1 = n_2 p \)% 
    % Correção: Adriano
    , where $\xi_{n_2}=\xi_{n1}^p$.
Therefore,
    % Correção: Adriano
    since $q^{n_1}\neq 1$, we obtain, by Equations \eqref{eq:mu_da_interna_x} and \eqref{eq:mu_da_interna_y}, 
\begin{eqnarray*}
    \Aut_\delta(A) 
        &=&\{(\mu_1,\mu_2)\in\Bbbk^*\otimes\Bbbk^*:\mu_1^{n_1}-1=0,\mu_2^{n_2}-1=0 \}\\
        &=&\{(\xi_{n_1}^i,\xi_{n_1}^{jp})\in\Bbbk^*\otimes\Bbbk^*:i,j\in\mathbb Z \}\\
        %&=& \langle (\xi_{n_1}, 1), (1, \xi_{n_1}^p) \rangle = \langle (\xi_{n_1}, 1), (1, \xi_{n_2}) \rangle\\
        &\cong& \mathbb{Z}_{n_1} \oplus \mathbb{Z}_{n_2}\\
        &\cong& G.
\end{eqnarray*}
\end{proof}

\begin{remark}
Assume, according to the hypotheses of Theorem \ref{classisotropy}, that $\Bbbk$ is algebraically closed. Then, for any positive integers $m,n$, the group $\Bbbk^{\ast} \oplus \Bbbk^{\ast}$ contains as a subgroup $H_{n} \oplus H_{m}$, where $H_{r}$ denotes the group of all $r$-th roots of unity in $\Bbbk$. By Theorem \ref{classisotropy}, there exists a derivation $\delta$ such that $\Aut_\delta(A) \cong H_{n} \oplus H_{m} \cong G$.
\end{remark}

\begin{proposition}\label{Proq}
Let \( q \) be a primitive \( p \)-th root of unity with \( p \notin \{-1, 1\} \). Then, for any nonzero integers \( r \) and \( s \), there does not exist \(\delta \in \Der(A)\) such that \(\Aut_\delta(A) \cong \mathbb{Z}_{p r} \oplus \mathbb{Z}_{p s}.\)
\end{proposition}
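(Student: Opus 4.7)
The plan is to pin down the smaller invariant factor of $\Aut_\delta(A)$, viewed as a finite abelian group, and show that it must be coprime to $p$. By uniqueness of the invariant factor decomposition, this will forbid any isomorphism with $\mathbb{Z}_{pr}\oplus \mathbb{Z}_{ps}$ whenever $r,s\neq 0$.

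First I would make the effective constraints on $\Aut_\delta(A)$ explicit. Write $\delta=\ad_w+aD_x+bD_y$ with $w=\sum c_{ij}x^iy^j$. By equations \eqref{eq:mu_da_interna_x} and \eqref{eq:mu_da_interna_y}, a diagonal automorphism with $\rho(x)=\mu_1 x,\ \rho(y)=\mu_2 y$ lies in $\Aut_\delta(A)$ if and only if $\mu_1^i\mu_2^j=1$ for every pair $(i,j)$ with $c_{ij}\neq 0$ and $(q^j-1,\,1-q^i)\neq(0,0)$; since $\ord(q)=p$, this last condition is exactly $(i,j)\notin p\mathbb{Z}\times p\mathbb{Z}$. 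Let $S=\{(i,j):c_{ij}\neq 0,\ (i,j)\notin p\mathbb{Z}^2\}$ and let $L$ be the sublattice of $\mathbb{Z}^2$ it generates. Assuming $\Bbbk$ is algebraically closed (the general case yields a subgroup of this one, which a fortiori cannot be $\mathbb{Z}_{pr}\oplus \mathbb{Z}_{ps}$ with $r,s\neq 0$), the standard character-torus duality gives
\[
\Aut_\delta(A)\;\cong\;\operatorname{Hom}(\mathbb{Z}^2/L,\,\Bbbk^*).
\]

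Second, for this to be isomorphic to the finite group $\mathbb{Z}_{pr}\oplus \mathbb{Z}_{ps}$, the quotient $\mathbb{Z}^2/L$ must be finite, i.e.\ $L$ must have rank $2$ (otherwise $\Aut_\delta(A)$ is infinite and the proposition is trivial). Putting a $2\times 2$ basis matrix $M$ of $L$ in Smith normal form produces invariant factors $d_1\mid d_2$ with
\[
\mathbb{Z}^2/L\cong \mathbb{Z}/d_1\oplus \mathbb{Z}/d_2,\qquad \Aut_\delta(A)\cong \mathbb{Z}/d_1\oplus \mathbb{Z}/d_2,
\]
and $d_1$ equals the gcd of the entries of $M$, which is the same as the gcd of all coordinates of all vectors in $L$.

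Third, the decisive step: since $L\supseteq S$ and some $(i_0,j_0)\in S$ satisfies $p\nmid i_0$ or $p\nmid j_0$, and since $d_1$ divides every coordinate of every element of $L$, we necessarily have $p\nmid d_1$. On the other hand, by uniqueness of the invariant factor form applied to $\mathbb{Z}_{pr}\oplus \mathbb{Z}_{ps}$ (with $r,s\neq 0$), an isomorphism $\Aut_\delta(A)\cong \mathbb{Z}_{pr}\oplus \mathbb{Z}_{ps}$ would force $d_1=p\gcd(|r|,|s|)$ and $d_2=p\operatorname{lcm}(|r|,|s|)$, both divisible by $p$—contradiction. The main obstacle, once the setup is unrolled, is the clean identification of the smaller invariant factor $d_1$ with the coordinate-gcd of $L$; after that translation, the hypothesis ``$S$ avoids $p\mathbb{Z}^2$'' immediately forces $p\nmid d_1$, and the conclusion follows from the structure theorem for finitely generated abelian groups.
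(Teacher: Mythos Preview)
Your proof is correct, and it takes a route genuinely different from the paper's. The paper argues concretely: assuming $\Aut_\delta(A)\cong\mathbb{Z}_{pr}\oplus\mathbb{Z}_{ps}$, it notes this group contains a copy of $\mathbb{Z}_p\oplus\mathbb{Z}_p$, observes that the \emph{unique} subgroup of $\Bbbk^*\oplus\Bbbk^*$ isomorphic to $\mathbb{Z}_p\oplus\mathbb{Z}_p$ is the explicit $G=\{(\xi_p^i,\xi_p^j)\}$, and then checks by evaluation at $(i,j)=(1,0)$ and $(0,1)$ that $G\subset\{\mu_1^{m_k}\mu_2^{n_k}=1\}$ forces $p\mid m_k$ and $p\mid n_k$ for every effective constraint, contradicting $(m_k,n_k)\notin p\mathbb{Z}^2$. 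You instead dualize: the effective exponent pairs generate a sublattice $L\subset\mathbb{Z}^2$, the isotropy group is $\operatorname{Hom}(\mathbb{Z}^2/L,\Bbbk^*)$, and the first Smith invariant $d_1$ of $L$ is the gcd of all coordinates of vectors in $L$, so the existence of a generator outside $p\mathbb{Z}^2$ gives $p\nmid d_1$, which is incompatible with the invariant factors of $\mathbb{Z}_{pr}\oplus\mathbb{Z}_{ps}$. The two arguments are dual: the paper's $\mathbb{Z}_p\oplus\mathbb{Z}_p$-subgroup test is precisely what detects $p\mid d_1$ on the character side. Your lattice formulation is conceptually cleaner and generalizes immediately to higher-rank tori and more constraints; the paper's version is more elementary and self-contained, needing only a primitive $p$-th root of unity in $\Bbbk$ (supplied by $q$) rather than passage to $\overline{\Bbbk}$. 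One small remark: your ``a fortiori'' reduction to the algebraically closed case is valid, but the reason deserves a word---since $p\nmid d_1$, the group over $\overline{\Bbbk}$ has cyclic $p$-torsion, hence cannot \emph{contain} $\mathbb{Z}_p\oplus\mathbb{Z}_p$, and therefore neither can any of its subgroups.
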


\begin{proof}
Suppose there exists \(\delta \in \Der(A)\) such that \(\mathbb{Z}_{p r} \oplus \mathbb{Z}_{p s} \cong \Aut_\delta(A)\). By Equations \eqref{eq:mu_da_interna_x} and \eqref{eq:mu_da_interna_y}, there exist positive integers \( m_i, n_i \) for \( i=1,\ldots,l \) such that \(\rho \in \Aut_\delta(A)\), with \(\rho(x) = \mu_1 x\) and \(\rho(y) = \mu_2 y\) if and only if
$
\mu_1^{m_i} \mu_2^{n_i} - 1 = 0.
$

Note that \(\mathbb{Z}_p \oplus \mathbb{Z}_p < \mathbb{Z}_{p r} \oplus \mathbb{Z}_{p s}\), and also that the only subgroup of \( \Bbbk^* \oplus \Bbbk^* \) isomorphic to \(\mathbb{Z}_p \oplus \mathbb{Z}_p\) is 
$G = \{ (\xi_p^i, \xi_p^j) : i,j \in \mathbb{Z} \}.$
Indeed, the elements \( x \in \mathbb{Z}_p \oplus \mathbb{Z}_p \) satisfy \( x^p - 1 = 0 \); moreover, \(|\mathbb{Z}_p \oplus \mathbb{Z}_p| = p^2\), and in \( \Bbbk^* \oplus \Bbbk^* \), the set of elements satisfying \( x^p - 1 = 0 \) is exactly \( G \), which is also a group of \( p^2 \) elements.

Note that if 
\[
H_k = \{ (\mu_1, \mu_2) \in \Bbbk^* \oplus \Bbbk^* : \mu_1^{m_k} \mu_2^{n_k} - 1 = 0 \},
\]
then \(H_k\) contains a subgroup \(G_k \cong G\). Since \(G\) is unique, we have \( G < H_k \).
Thus, for every pair of integers \( i, j \), we have
$
\xi_p^{i m_k} \xi_p^{j n_k} - 1 = 0.$

Taking \( i=0 \) and \( j=1 \), we get \(\xi_p^{n_k} = 1\), which implies that \( n_k \) is a multiple of \( p \). On the other hand, for \( i=1 \) and \( j=0 \), we get \(\xi_p^{m_k} = 1\), and so \( m_k \) is also a multiple of \( p \).
We get a contradiction, since by Equations \eqref{eq:mu_da_interna_x} and \eqref{eq:mu_da_interna_y}, for \(\mu_1^{m_k} \mu_2^{n_k} - 1 = 0\) to be a condition defining \(\Aut_\delta(A)\), we must have either \( q^{m_k} - 1 \neq 0 \) or \( q^{n_k} - 1 \neq 0 \), which does not occur.
\end{proof}

\begin{remark}
    The class of isotropy groups over $A$ does not necessarily includes all infinite subgroups of $\Bbbk^*\oplus \Bbbk^*$. For example, if $\alpha$ is a transcendental element over $\Bbbk^*$, the subgroup $\langle(\alpha,1)\rangle$ is not obtained as a group $\Aut_\delta(A)$ (see Remark \ref{remarkcharac} about characters). 
\end{remark}

\begin{remark} Note that, at the end of the proof of Theorem~\ref{classisotropy}, the assumption that $q$ is not a root of unity is only used to conclude that $q^{n_1}\neq 1$.
 Assuming that $\Bbbk$ is algebraically closed, we can combine this observation with Proposition~\ref{Proq} in order to distinguish two quantum planes $\Bbbk_q[x,y]$ and $\Bbbk_{q_1}[x,y]$ whenever there exists $n$ such that $q^n \neq 1$ and $q_1^n = 1$.  
Indeed, the condition $q^n \neq 1$ implies the existence of $\delta \in \Der(\Bbbk_q[x,y])$ such that
$\Aut_\delta(\Bbbk_q[x,y]) \cong \mathbb{Z}_n \times \mathbb{Z}_n$,
while $q_1^n = 1$ implies that $\mathbb{Z}_n \times \mathbb{Z}_n$ is not isomorphic to any isotropy group of derivations in $\Der(\Bbbk_{q_1}[x,y])$.  
Moreover, if $\rho:\Bbbk_q[x,y] \to \Bbbk_{q_1}[x,y]$ is an isomorphism, then the map
$\rho^*: \Der(\Bbbk_q[x,y]) \to \Der(\Bbbk_{q_1}[x,y])$, defined by $\rho^*(\delta) = \rho \circ \delta \circ \rho^{-1}$,
is a bijection such that
$\Aut_\delta(\Bbbk_q[x,y]) \cong \Aut_{\rho^*(\delta)}(\Bbbk_{q_1}[x,y])$.
By contraposition, if there exists $n$ such that $q^n \neq 1$ and $q_1^n = 1$, it follows that $\Bbbk_q[x,y] \not\cong \Bbbk_{q_1}[x,y]$.
Although the complete classification of quantum planes is already known, in particular, by \cite[Theorem A]{vivas} we have $\Bbbk_q[x,y] \cong \Bbbk_{q_1}[x,y]$ if and only if
$q \in \{q_1, q_1^{-1}\}$.
However, the isotropy group approach provides a new perspective on the characterization of quantum planes, which may prove useful in the study of other classes of algebras for which the classification problems remains open.

\end{remark}
     
In the following results, we extract necessary and sufficient conditions to determine the structure of these isotropy groups. In particular, we demonstrate that it is possible to determine explicitly when this group is trivial.

\section{Structure of the Isotropy Group}

Let $\Bbbk$ be an algebraically closed field and let $a, b, c, d$ be positive integers such that $ad - bc \neq 0$. In this section, we determine the group structure of the subset $G \subseteq \Bbbk^* \oplus \Bbbk^*$ of pairs $(x, y)$ that satisfy the system of equations
\begin{equation}
    \label{eq:grupooriginal}
    \left\{\begin{array}{rcl}
        x^ay^b-1&=&0\\
        x^cy^d-1&=&0
    \end{array}\right.
\end{equation}

To this end, let $k = \gcd(a, b, c, d)$, and define $a_0 = a/k$, $b_0 = b/k$, $c_0 = c/k$, and $d_0 = d/k$. Also let $r = \gcd(a_0, c_0)$, $s = \gcd(b_0, d_0)$, and define $a_1 = a_0/r$, $c_1 = c_0/r$, $b_1 = b_0/s$, and $d_1 = d_0/s$.

Note that $\gcd(a_1, c_1) = 1$, so there exist integers $m$ and $n$ such that $a_1 m + c_1 n = 1$. Consider the following system:
\begin{equation}
    \label{eq:sistema_do_grupo}
    \left\{\begin{array}{rcl}
	x^{{a_1}}y^{{b_1}}&=1\\
	x^{{c_1}}y^{{d_1}}&=1
    \end{array}\right.  
\end{equation}

Raising both sides of the first equation to the power $m$, the second to the power $-n$, and isolating $y$, we obtain:

\begin{equation}
    \label{eq:sistema_do_grupo_2}
    \begin{cases}
    	x^{m{a_1}}&=y^{-m{b_1}}\\
    	x^{-n{c_1}}&=y^{n{d_1}}
\end{cases} 
\end{equation}

Dividing the first equation by the second gives $x = x^{m a_1 + n c_1} = y^{-m b_1 - n d_1}$. Substituting these into the equations of (3), we get:
\begin{equation}
    \label{eq:sistema_do_grupo_3}
    \begin{cases}
    	y^{(-m{b_1}-n{d_1}){a_1}+{b_1}}&=1\\
    	y^{(-m{b_1}-n{d_1}){c_1}+{d_1}}&=1
    \end{cases}
\end{equation}

Raising the first equation to the power $c_1$, the second to the power $a_1$, and dividing the second by the first, we get:
\begin{equation}
    \label{eq:isola_y}
    y^{{a_1}{d_1}-{b_1}{c_1}}=1
\end{equation}

This implies that there exists an integer $i$ such that $y = \xi_{a_1 d_1 - b_1 c_1}^i$, and consequently $x = \xi_{a_1 d_1 - b_1 c_1}^{-i(m b_1 + n d_1)}$. To verify that the pair
\begin{equation}
    \label{eq:solução_de_grupo2}
    (x_i,y_i)=\left(\xi_{{a_1}{d_1}-{b_1}{c_1}}^{-i(m{b_1}+n{d_1})},\xi_{{a_1}{d_1}-{b_1}{c_1}}^i\right)
\end{equation}
is a solution to the system \eqref{eq:grupooriginal}, we compute:
%%%%%%%%%%%%%%%%%%%%%%%%%%%%%%%%%%%%%%%%%%%%%%%%%%%%%%%%%%%%%%%%%%%%
%%%% NOTA: Linhas do desenvolvimento apagadas para deixar mais exuto
%%%%%%%%%%%%%%%%%%%%%%%%%%%%%%%%%%%%%%%%%%%%%%%%%%%%%%%%%%%%%%%%%%%%
\begin{align*}
	x_i^{{a_1}}y_i^{{b_1}}
	&=\left(\xi_{{a_1}{d_1}-{b_1}{c_1}}^{-i(m{b_1}+n{d_1})}\right)^a\left(\xi_{{a_1}{d_1}-{b_1}{c_1}}^i\right)^b\\
	%&=\xi_{{a_1}{d_1}-{b_1}{c_1}}^{-i(m{b_1}+n{d_1}){a_1}+i{b_1}}\\
	%&=\xi_{{a_1}{d_1}-{b_1}{c_1}}^{i(-m{a_1}{b_1}+n{a_1}{d_1}+{b_1}+n{b_1}{c_1}-n{b_1}{c_1})}\\
	%&=\xi_{{a_1}{d_1}-{b_1}{c_1}}^{i(-m{a_1}{b_1}-n{b_1}{c_1}+{b_1}+n{a_1}{d_1}+n{b_1}{c_1})}\\
	&=\xi_{{a_1}{d_1}-{b_1}{c_1}}^{i(-(m{a_1}+n{c_1}){b_1}+{b_1}+n({a_1}{d_1}+{b_1}{c_1}))}\\
	%&=\xi_{{a_1}{d_1}-{b_1}{c_1}}^{i(-{b_1}+{b_1}+n({a_1}{d_1}+{b_1}{c_1}))}\\
	&=\xi_{{a_1}{d_1}-{b_1}{c_1}}^{in({a_1}{d_1}+{b_1}{c_1})}\\
	&=1.
\end{align*}
The same applies analogously for the second equation of the system.

Note that the system \eqref{eq:sistema_do_grupo} has $|a_1 d_1 - b_1 c_1|$ distinct solutions, and the group formed by these solutions is isomorphic to the cyclic group $\mathbb{Z}_{a_1 d_1 - b_1 c_1}$, since it is generated by the pair of elements in Equation \eqref{eq:solução_de_grupo2}.

From the solutions of system \eqref{eq:sistema_do_grupo}, we obtain the set of solutions of the system:
\begin{equation}
    \label{eq:grupo_2}
    \left\{\begin{array}{rcl}
	x^{{a_0}}y^{{b_0}}&=1\\
	x^{{c_0}}y^{{d_0}}&=1
    \end{array}\right. 
\end{equation}

Note that if $r = 1$, then the solution set is the same as obtained in the previous step. However, we will generalize this.

Define $p_1 = a_1 d_1 - b_1 c_1$. Given that $a_0 = a_1 r$ and $c_0 = c_1 r$, and $b_0 = b_1 s$, $d_0 = d_1 s$, it's easy to see that for each pair of integers $i_0$ and $j_0$
\begin{equation}
    \label{eq:solucao_grupo_2}
    (x_{i,{j_0}},y_{i,{i_0}})=\left(\xi_{r{p_1}}^{{p_1}{j_0}-i(m{b_1}+n{d_1})},\xi_{s{p_1}}^{{p_1}{i_0}+i}\right)
\end{equation}
is a solution of system \eqref{eq:grupo_2}. %Indeed, we observe:
%\begin{align*}
%	x_{i,{j_0}}^{{a_0}}y_{i,{i_0}}^{{b_0}}
%		&=\left(\xi_{r{p_1}}^{{p_1}{j_0}-i(m{b_1}+n{d_1})}\right)^{r{a_1}}\left(\xi_{s{p_1}}^{{p_1}{i_0}+i}\right)^{s{b_1}}\\
%		&=\left(\xi_{{p_1}}^{{p_1}{j_0}-i(m{b_1}+n{d_1})}\right)^{{a_1}}\left(\xi_{{p_1}}^{{p_1}{i_0}+i}\right)^{{b_1}}\\
%		&=\left(\xi_{{p_1}}^{i(m{b_1}+n{d_1})}\right)^{{a_1}}\left(\xi_{{p_1}}^{i}\right)^{{b_1}}\\
%		&=1
%\end{align*}
%Analogously, the pair also solves the second equation of the system.
%%%%%%%%%%%%%%%%%%%%%%%%%%%%%%%%%%%%%%%%%%%%%%%%%%%%%%%%%%%%%%%%%%%%%%
%%% NOTA: verificação apagada por ser de fácil verificação.
%%%%%%%%%%%%%%%%%%%%%%%%%%%%%%%%%%%%%%%%%%%%%%%%%%%%%%%%%%%%%%%%%%%%%%

Note that system \eqref{eq:grupo_2} has one distinct solution for each $i = 0, 1, ..., p_1 - 1$, $i_0 = 0, 1, ..., s - 1$, and $j_0 = 0, 1, ..., r - 1$, totaling:
\begin{align*}
	|{p_1}rs|
		&=|({a_1}{d_1}-{b_1}{c_1})rs|\\
		&=|{a_1}r{d_1}s-{b_1}s{c_1}r|\\
		&=|{a_0}{d_0}-{b_0}{c_0}|
\end{align*}
solutions.

As a group, this solution set is isomorphic to $\mathbb{Z}_r \oplus \mathbb{Z}_{s p_1}$, being generated by the elements $\left(\xi_{rp_1}^{p_1 + sl}, \xi_{sp_1}^s\right)$ of order $rp_1$, and $\left(1, \xi_s\right)$ of order $s$.

In this step, we obtain the solution set of $G$, from system \eqref{eq:grupooriginal}, using the solutions of system \eqref{eq:grupo_2}. Consider the set of pairs $(x_{i, j_0}, y_{i, i_0})$ from Equation \eqref{eq:solucao_grupo_2}. It's easy to see that for each pair of integers $i_1$, $j_1$, the pair
\begin{equation}
    \label{eq:solucao_grupo_3}
    (x_{i,{j_0},{j_1}},y_{i,{i_0},{i_1}})=\left(\xi_{kr{p_1}}^{r{p_1}{j_1}+{p_1}{j_0}-i(m{b_1}+n{d_1})},\xi_{ks{p_1}}^{s{p_1}{i_1}+{p_1}{i_0}+i}\right)
\end{equation}
is an element of $G$. %De fato, temos que
%\begin{align*}
%	x_{i,{j_0},{j_1}}^{a}y_{i,{i_0},{i_1}}^{b}
%		&=\left(\xi_{kr{p_1}}^{r{p_1}{j_1}+{p_1}{j_0}-i(m{b_1}+n{d_1})}\right)^{k{a_0}}
%		\left(\xi_{ks{p_1}}^{s{p_1}{i_1}+{p_1}{i_0}+i}\right)^{k{b_0}}\\
%		&=\left(\xi_{r{p_1}}^{{p_1}{j_0}-i(m{b_1}+n{d_1})}\right)^{{a_0}}
%		\left(\xi_{s{p_1}}^{{p_1}{i_0}+i}\right)^{{b_0}}\\
%		&=1
%\end{align*}
%5Similarly, we can show that $x_{i,j_0,j_1}^c y_{i,i_0,i_1}^d = 1$.
%%%%%%%%%%%%%%%%%%%%%%%%%%%%%%%%%%%%%%%%%%%%%%%%%%%%%%%%%%%%%%%%%%%%%%
%%% NOTA: verificação apagada por ser de fácil verificação.
%%%%%%%%%%%%%%%%%%%%%%%%%%%%%%%%%%%%%%%%%%%%%%%%%%%%%%%%%%%%%%%%%%%%%%

Observe that for each triple $(i, i_0, j_0)$, Equation \eqref{eq:solucao_grupo_3} yields $k^2$ elements of $G$, as $i_1, j_1 = 0, 1, ..., k - 1$.

Letting $l = -m b_1 - n d_1$, we can write the solution from Equation \eqref{eq:solucao_grupo_3} as
\begin{equation}
    \label{eq:solucao_grupo_32}
    \left(\xi_{kr}^{r{j_1}+{j_0}},\xi_{ks}^{s{i_1}+{i_0}}\right)
    \left(\xi_{kr{p_1}}^{il},\xi_{ks{p_1}}^{i}\right)
\end{equation}

Considering the ranges for $i_0, i_1, j_0, j_1$ that yield distinct elements of $G$, we see that the expressions $r j_1 + j_0$ and $s i_1 + i_0$ cover the sets $\{0, ..., kr-1\}$ and $\{0, ..., ks-1\}$, respectively. Thus, we can represent the elements of Equation \eqref{eq:solucao_grupo_32} simply as:
\begin{equation}
    \label{eq:solucao_grupo_33}
    (x_{i,{j_0}},y_{i,{i_0}})=
        \left(\xi_{kr}^{{j_0}},\xi_{ks}^{{i_0}}\right)
        \left(\xi_{kr{p_1}}^{il},\xi_{ks{p_1}}^{i}\right)
\end{equation}
where $i = 0, ..., p_1 - 1$, $i_0 = 0, ..., ks - 1$, and $j_0 = 0, ..., kr - 1$.

Taking all possible values for $i, i_0, j_0$, we find that:
\begin{align*}
	|G|
	&=|{p_1}kskr|\\
	&=|({a_1}{d_1}-{b_1}{c_1})kskr|\\
	&=|(({a_1}rk)({d_1}sk)-({b_1}sk)({c_1}rk))|\\
	&=|ad-bc|
\end{align*}

Regarding the structure of $G$, we observe that it is generated by the elements
$$z_1=\left(\xi_{kr},1\right)
\left(\xi_{kr{p_1}}^{sl},\xi_{ks{p_1}}^{s}\right)$$
which has order $krp_1$ (obtained with $(i, i_0, j_0) = (s, 0, 1)$) and
$$z_2=\left(\xi_{kr}^{r},\xi_{ks}\right)
\left(1,1\right)$$
which has order $ks$ (obtained with $(i, i_0, j_0) = (0, 1, r)$). Considering these generators, we conclude:
$$G\cong \mathbb Z_{kr{p_1}}\oplus \mathbb Z_{ks}$$

We can summarize this entire discussion in the following result:

\begin{theorem}
\label{teo:estrutura_grupo_isotropia}
Let $a, b, c, d$ be positive integers such that $ad - bc \neq 0$. Let $G$ be the subgroup of $\Bbbk^* \oplus \Bbbk^*$ consisting of elements $(x, y)$ such that $x^a y^b = 1$ and $x^c y^d = 1$. Let $k = \gcd(a, b, c, d)$, $r = \gcd(a/k, c/k)$, $s = \gcd(b/k, d/k)$, and $p = (ad - bc) / (k^2 r s)$. Then

$$G \cong \mathbb{Z}_{krp} \oplus \mathbb{Z}_{ks}$$

In particular, $|G| = |ad - bc|$.
\end{theorem}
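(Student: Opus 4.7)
The plan is to solve the defining system $x^ay^b=1$, $x^cy^d=1$ in $\Bbbk^{*}\oplus\Bbbk^{*}$ by peeling off the gcd data in three layers: first quotient out $k=\gcd(a,b,c,d)$, then $r=\gcd(a/k,c/k)$ and $s=\gcd(b/k,d/k)$, and finally handle the coprime residual. At each reduction I would simultaneously count lifts and carry an explicit solution forward, so that at the end one obtains both the cardinality $|G|=|ad-bc|$ and an explicit pair of generators whose orders yield the direct-sum decomposition.

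For the innermost coprime system $x^{a_1}y^{b_1}=1$, $x^{c_1}y^{d_1}=1$ with $\gcd(a_1,c_1)=1$, Bezout provides integers $m,n$ with $ma_1+nc_1=1$; combining appropriate powers of the two equations expresses $x$ as an explicit power of $y$, and substitution yields $y^{p}=1$ where $p=a_1d_1-b_1c_1$. The solutions thus form a cyclic group of order $p$. Moving up one layer, replacing $a_1,c_1$ by $a_0=ra_1,c_0=rc_1$ and $b_1,d_1$ by $b_0=sb_1,d_0=sd_1$ introduces independent $r$-th and $s$-th root-of-unity lifts on the two coordinates, yielding a solution group of order $rsp$. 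Finally, the original equation $x^ay^b=1$ is equivalent to $(x^{a_0}y^{b_0})^k=1$, so each solution of the middle system lifts to $k^2$ solutions of the full system; the total count is $k^2rsp$, matching $|ad-bc|$ via the identity $ad-bc=k^2rs(a_1d_1-b_1c_1)$.

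To upgrade the cardinality to the group structure, I would track two generators produced by this construction: one combining the coprime-case cyclic generator with the $r$- and $k$-scale lifts on the $x$-side, of total order $krp$; and a second $y$-axis generator of order $ks$ built from the $s$- and $k$-scale lifts. Once the two orders are checked by direct substitution, the subgroup they generate has order at most $krp\cdot ks=|ad-bc|=|G|$, forcing equality and the isomorphism $G\cong\mathbb{Z}_{krp}\oplus\mathbb{Z}_{ks}$. The main obstacle I anticipate is confirming independence of these two generators, i.e.\ that the subgroup they generate is genuinely a direct sum and not a quotient of smaller order; the cleanest route is to project onto the second coordinate to isolate the pure $y$-generator, use that projection to split off a $\mathbb{Z}_{ks}$ summand, and identify the kernel of the projection with the cyclic group generated by the first element, matching cardinalities to rule out collapse.
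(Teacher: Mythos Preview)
Your three-layer reduction---strip off $k$, then $r$ and $s$, then solve the coprime residual via B\'ezout and lift back through root-of-unity choices---is exactly the argument the paper gives, down to exhibiting an explicit pair of generators of orders $krp$ and $ks$. One small caution on your proposed independence check: projection onto the second coordinate has image of order $ksp$, not $ks$ (the coprime-core generator already moves the $y$-coordinate), so that projection will not isolate a $\mathbb{Z}_{ks}$ summand directly; the paper is equally terse at this step and simply asserts the direct sum once the two orders are known, so the cleanest finish is to verify the two cyclic subgroups intersect trivially and match cardinalities.
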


\begin{remark}
    Observe that when $\Bbbk$ is not algebraically closed, we obtain $$G \subseteq \mathbb{Z}_{krp} \oplus \mathbb{Z}_{ks}.$$
\end{remark}

\section{Some observations using the Intersection Index:}

In this section, we identify each element of the isotropy of a derivation $\delta=\ad_w$, where $w=x^ay^b+x^cy^d$, in $A=\Bbbk_q[x,y]$ with a point on the algebraic torus $\Bbbk^* \times \Bbbk^*$. Suppose further that if $q^n=1$, then $(a,b), (c,d)\notin n\mathbb Z\times n\mathbb Z$: this tells us that the case of Proposition \ref{Proq} does not apply. Using geometric properties, we assume in this section that $\Bbbk$ is algebraically closed and of characteristic zero.

Also, note that when we homogenize the curves $F:x^ay^b-1=0$ and $G:x^cy^d-1=0$, the remaining intersection points in the projective $\mathbb{P}^2$ must be only one of the following: $(1:0:0)$ or $(0:1:0)$.

Denoting by $(F,G)_P$ the multiplicity, or index, of intersection of two plane curves $F$ and $G$ at the point $P \in F \cap G$.

\begin{example} Using the same context as in Example 5., that is, considering $\bar{F}:x^3y-z^4$, $\bar{G}:x^2y^2-z^4$, then:$$(\bar{F},\bar{G})_P=(x^3y-z^4, \bar{G})_P=(x^2y(x-y),\bar{G})_P=$$$$=2(x,\bar{G})_P+(y,\bar{G})_P+(x-y,\bar{G})_P=2.4+4+4.$$

We obtain exactly as obtained in Example 5: $4$ points in the affine part; more precisely, $\mathbb Z_4$. Furthermore, we have that the multiplicity of $(1:0:0)$ is $4$ and the multiplicity of $(0:1:0)$ is $8$.

%Considering $F:x^3y^2-z^5$, $G:xy^3-z^4$, then:$$(F,G)_P=(x^3y^2-zxy^3, G)_P=(x,G)_P+2(y,G)_P+(x^2-zy,G)_P.$$

\end{example}

\begin{remark}
Two curves $x^ay^b=1$ and $x^cy^d=1$, when they have no irreducible component in common, $ad-bc \neq 0$, have an intersection index at the identity, point $(1,1)$, equal to $1$. In fact, the tangent lines are given by $a(x-1)+b(y-1)=0$ and $c(x-1)+d(y-1)=0$ which, by the condition $ad-bc \neq 0$, are not parallel. Note that the same is true for any intersection point $(x_0,y_0)$ in the affine part, whose tangent lines are $a(x-x_0)+b(y-y_0)=0$ and $c(x-x_0)+d(y-y_0)=0$.
\end{remark}

\begin{proposition} \label{prop.curvas}Consider the curves $F: x^ay^b=1$ and $G: x^cy^d=1$ with $ad-bc \neq 0$ and $\gcd(a,b)=\gcd(c,d)=1$. Then, $$(F,G)_{(0:1:0)}=ac+ \min\{bc,ad\}$$

and
$$(F,G)_{(1:0:0)}=bd+\min\{da,bc\}.$$

\end{proposition}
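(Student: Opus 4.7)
The plan is to compute $(F,G)_{(0:1:0)}$ directly and to recover $(F,G)_{(1:0:0)}$ by the symmetry $x \leftrightarrow y$. Homogenizing gives $\bar F = x^a y^b - z^{a+b}$ and $\bar G = x^c y^d - z^{c+d}$. In the affine chart $y = 1$, the point $(0:1:0)$ corresponds to the origin $(x,z) = (0,0)$, and the local equations become
\[
F_1 : x^a - z^{a+b} = 0, \qquad G_1 : x^c - z^{c+d} = 0.
\]
The point $(1:0:0)$ lies in the chart $x = 1$, and the involution $x \leftrightarrow y$ amounts to swapping the pairs $(a,b) \leftrightarrow (b,a)$ and $(c,d) \leftrightarrow (d,c)$ in the formula at $(0:1:0)$; this sends $ac + \min\{bc, ad\}$ to $bd + \min\{ad, bc\}$, exactly the claimed value at $(1:0:0)$. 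Hence it suffices to handle a single point.

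For the main computation I would use Puiseux parametrization. Since $\gcd(a, a+b) = \gcd(a,b) = 1$, the polynomial $x^a - z^{a+b}$ is analytically irreducible at the origin, and its unique local branch is parametrized by $z = t^a$, $x = t^{a+b}$ (one immediately checks $F_1(t^{a+b}, t^a) = 0$). By the classical formula expressing the intersection multiplicity of an irreducible branch against another curve as the $t$-adic order of the latter restricted to the parametrization,
\[
(F_1, G_1)_{(0,0)} = \operatorname{ord}_t\bigl((t^{a+b})^c - (t^a)^{c+d}\bigr) = \operatorname{ord}_t\bigl(t^{ac+bc} - t^{ac+ad}\bigr).
\]
Provided the two exponents are distinct, this order equals $ac + \min\{bc, ad\}$, as desired.

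The one delicate point is ruling out cancellation, i.e.\ ensuring $bc \neq ad$. If $bc = ad$, then from $a \mid bc$ together with $\gcd(a,b) = 1$ we obtain $a \mid c$; writing $c = ka$ yields $d = kb$, and then $\gcd(c,d) = k\gcd(a,b) = k = 1$ forces $(a,b) = (c,d)$, contradicting the standing assumption $ad - bc \neq 0$. So the hypothesis excludes this degenerate case and the order computation is valid. I expect the only conceptual obstacle to be the appeal to analytic irreducibility of $x^a - z^{a+b}$ and its explicit Puiseux parametrization, but both are standard consequences of $\gcd(a, a+b) = 1$, so the argument should go through cleanly once this input is recorded.
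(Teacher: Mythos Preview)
Your proof is correct and follows essentially the same route as the paper's: pass to the chart $y=1$, Puiseux-parametrize the (analytically irreducible) branch $x^a=z^{a+b}$ by $(x,z)=(t^{a+b},t^a)$, and read off the $t$-order of $G_1$ along it; the paper treats $(1:0:0)$ ``in the same way'' while you invoke the $x\leftrightarrow y$ symmetry, which amounts to the same thing. One small remark: your detour through $\gcd$'s to rule out $bc=ad$ is unnecessary, since $bc=ad$ is literally $ad-bc=0$, already excluded by hypothesis.
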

\begin{proof}

At the point $(0:1:0)$, we obtain the equations $\bar{F}:x^a=z^{a+b}$ and $\bar{G}:x^c=z^{c+d}$. Taking the following Puiseux parameterization of $\bar{F}:$ given by $\varphi(t)=(t^{a+b},t^a)$. We obtain, by \cite[Theorem 14]{brie}, that the multiplicity at the point $(0:1:0)$ is the order of $0$ in $\varphi^{*}(\bar{G})$. Since $\varphi^{*}(\bar{G})=t^{c(a+b)}-t^{a(c+d)}=t^{ac}(t^{cb}-t^{ad})$. Thus, the multiplicity of $(0:1:0)$ is given by $ac+ \min\{bc,ad\}$. At the point $(1:0:0)$, we obtain, in the same way as the previous point, that the multiplicity is given $bd+\min\{da,bc\}$. \end{proof}

\begin{remark} \label{obs.curvas}
If $\gcd(a,b)=d_1>1$, we can write $$x^ay^b-1={(x^{a'}y^{b'})^{d_1}}-1= \prod_{k=0}^{d_1-1}(x^{a'}y^{b'}-\xi^k),$$ with $a'=\frac{a}{d_1}$, $b'=\frac{b}{d_1}$ and $\xi$ is the $d_1$-th root of unity. Therefore, each factor defines an irreducible affine curve, since $\Bbbk$ is algebraically closed and by irreducibility in $\Bbbk[x^{\pm 1},y^{\pm 1}]$: thus, the curve $F: x^ay^b-1=0$ has $d_1$ distinct irreducible components. Likewise, the projective curve $\bar{F}:x^ay^b-z^{a+b}=0$ given by the homogenization of $x^ay^b-1=0$ factors into $d_1$ distinct irreducible components: $$\bar{F}=\prod_{k=0}^{d_1-1}(x^{a'}y^{b'}-\xi^kz^{a'+b'}).$$

If $\gcd(c,d)=d_2>1$, we can similarly write the curves $G:x^cy^d-1=0$ and $\bar{G}:x^cy^d-z^{c+d}=0$ as: $$G=\prod_{k=0}^{d_2-1}(x^{c'}y^{d'}-\zeta^k),$$ $$\bar{G}=\prod_{k=0}^{d_2-1}(x^{c'}y^{d'}-\zeta^kz^{c'+d'}),$$ both with $d_2$ distinct irreducible components.

Note that all components of $\bar{F}$ and $\bar{G}$ pass through both projective points $(0:1:0)$ and $(1:0:0)$. And so, similarly the previous Proposition and using the additivity of the intersection index, we have:

$$(F,G)_{(0:1:0)}=d_1d_2(a'c'+ \min\{b'c',a'd'\})$$

and
$$(F,G)_{(1:0:0)}=d_1d_2(b'd'+\min\{b'c',a'd'\}).$$
\end{remark}

\begin{example}
Given the curves $F: x^2y^4=1$ and $G: x^3y^9=1$. Note that we have a total of $72$ points counted with multiplicity, of which $18-12=6$ distinct points are in the affine part. Let us observe that the remaining $66$ points are divided between $(0:1:0)$ and $(1:0:0)$. In fact, take the branch $xy^2=1$ of $F$ and the branch $xy^3=1$ of $G$; we thus obtain: multiplicity $3$ in $(0:1:0)$ and multiplicity $8$ in $(1:0:0)$.
\end{example}

As a consequence of Proposition \ref{prop.curvas} and Observation \ref{obs.curvas}, using Bezout's Theorem, we could show in another way part of the result obtained in Theorem \ref{teo:estrutura_grupo_isotropia}: that is, there are, in the affine part, exactly $|ad-bc|$ distinct points in the intersection of both curves.

\section*{Acknowledgement}

The authors would like to thank  Ivan Pan (CMAT-UdelaR), Paula Lomp (FCUP), Christian Lomp (FCUP) and Gabriel Fazoli (IMPA) for their literature suggestions and helpful comments. We are also grateful for the referee’s insightful comments and valuable suggestions. This work was funded by the Rio Grande do Sul Research Foundation (FAPERGS).

\vspace{5mm}
\small

\ttfamily ADRIANO DE SANTANA, UNIVERSIDADE TECNOLÓGICA FEDERAL DO PARANÁ, UTFPR TOLEDO/PR, BRASIL.

\textit{E-mail address:} adrianosantana@utfpr.edu.br 
\vspace{3mm}

\ttfamily RENE BALTAZAR, UNIVERSIDADE FEDERAL DO RIO GRANDE, FURG, SANTO ANTÔNIO DA PATRULHA/RS, BRASIL.

\textit{E-mail address:} renebaltazar.furg@gmail.com

\vspace{3mm}
\ttfamily ROBSON VINCIGUERRA, UNIVERSIDADE TECNOLÓGICA FEDERAL DO PARANÁ, UTFPR TOLEDO/PR, BRASIL.

\textit{E-mail address:} robsonwv@gmail.com
\vspace{3mm}

\ttfamily WILIAN DE ARAUJO, UNIVERSIDADE TECNOLÓGICA FEDERAL DO PARANÁ, UTFPR TOLEDO/PR, BRASIL.

\textit{E-mail address:} wilianmat@yahoo.com.br

\end{document}